\newtheorem{thm}{Theorem}
\newtheorem{theo}{Theorem}[section]
\newtheorem{definition}{Definition}[section]
\newtheorem{lemma}[theo]{Lemma}
\newtheorem{conje}[thm]{Cojecture}
\begin{document}
%\date{}

\title{
Crowns in pseudo-random graphs and Hamilton cycles in their squares
}
%\\ {\small ccdense.tex} }

\author{
Michael Krivelevich
\thanks{
School of Mathematical Sciences,
Tel Aviv University, Tel Aviv 6997801, Israel.
Email: {\tt krivelev@tauex.tau.ac.il}.  Research supported in part
by USA-Israel BSF grant 2018267.}
}

\maketitle
\begin{abstract}
A crown with $k$ spikes is an edge-disjoint union of a cycle $C$ and a matching $M$ of size $k$ such that each edge of $M$ has exactly one vertex in common with $C$. We prove that if $G$ is an $(n,d,\lambda)$-graph with $\lambda/d\le 0.001$ and $d$ is large enough, then $G$ contains a crown on $n$ vertices with $\lfloor n/2\rfloor$ spikes. As a consequence, such $G$ contains a Hamilton cycle in its square $G^2$.
\end{abstract}

\section{Introduction}\label{s1}
A {\em Hamilton cycle} in a graph $G$ is a a cycle passing through all vertices of $G$. A graph possessing a Hamilton cycle is called {\em Hamiltonian}. Hamiltonicity has long been one of the most central and widely studied topics in Graph Theory; we refer the reader to surveys \cite{Gou14, KO14} reflecting the state of affairs at large.

Given the prominence of Hamiltonicity in the research in graphs, it is only natural to expect it to be studied in the context of pseudo-random graphs. Informally speaking, a graph $G$ on $n$ vertices is {\em pseudo-random} is its edge distribution resembles that of a truly random graph $G(n,p)$ of the same expected density $p=|E|/\binom{n}{2}$. The reader is invited to consult the survey \cite{KS06} for a comprehensive coverage of pseudo-random graphs.

Here we adopt the very frequently used formalism of $(n,d,\lambda)$-graphs as a model of pseudo-random graphs. A graph $G$ is an {\em $(n,d,\lambda)$-graph} if it has $n$ vertices, is $d$-regular, and all eigenvalues $\lambda_i$ of its adjacency matrix, but the first/trivial one $\lambda_1=d$, satisfy $|\lambda_i|\le \lambda$. Details about this model of pseudo-random graphs and its many properties can be found in \cite{KS06}. The so-called Expander Mixing Lemma (see Section \ref{s2-1} for a statement) provides a bridge between graph eigenvalues of regular graphs and their edge distribution.

One of the well known conjectures about pseudo-random graphs was proposed by the author and Sudakov some twenty years ago:
\begin{conje}[\cite{KS03}]\label{conj1}
There exists an absolute constant $C>0$ such that any $(n,d,\lambda)$-graph $G$ with $d/\lambda\ge C$ contains a Hamilton cycle.
\end{conje}
The validity of this conjecture would be very handy in proving Hamiltonicity of many regular graphs.

So far there have been several partial results towards establishing Conjecture \ref{conj1}. In the very same paper \cite{KS03}, Krivelevich and Sudakov proved that if $n$ is sufficiently large, then assuming
\begin{equation}\label{lb1}
\frac{d}{\lambda}\ge \frac{1000 \log n\log\log n}{(\log\log n)^2}
\end{equation}
guarantees Hamiltonicity of any $(n,d,\lambda)$-graph $G$. This shows the validity of Conjecture \ref{conj1} up to factors logarithmic in the number of vertices $n$. Hefetz, Krivelevich and Szab\'o \cite{HKS09} provided a sufficient condition for Hamiltonicity for general graphs in terms of their expansion and connectivity; when applied to $(n,d,\lambda)$-graphs, the condition reduces to (\ref{lb1}), up to multiplicative constants. Very recently, Glock, Munh\'a Correia and Sudakov \cite{GMS23} improved it further and showed that assuming $d/\lambda\ge C(\log n)^{1/3}$ for some large enough constant $C$ suffices to guarantee Hamiltonicity. They also established the validity of Conjecture \ref{conj1} for regular graphs of polynomially large degree $d\ge n^{\alpha}$ for an arbitrary fixed $\alpha>0$.

In this paper we make yet another step towards settling Conjecture \ref{conj1} and prove:
\begin{thm}
\label{th1}
Let $G$ be an $(n,d,\lambda)$-graph. If $\lambda/d\le 0.001$ and $d$ is large enough, then the square $G^2$ of $G$ contains a Hamilton cycle.
\end{thm}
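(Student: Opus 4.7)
My plan is to derive Theorem~\ref{th1} from the stronger combinatorial claim advertised in the abstract, namely that under the same hypotheses $G$ contains a crown on $n$ vertices with $\lfloor n/2\rfloor$ spikes. The reduction is immediate. Let $C=v_1v_2\cdots v_m$ with $m=\lceil n/2\rceil$ be the cycle of the crown, and suppose every $v_i$ that is incident to the matching $M$ is paired to a spike tip $w_i\notin V(C)$. I replace each such $C$-edge $v_iv_{i+1}$ with the detour $v_i\to w_i\to v_{i+1}$. The edge $v_iw_i$ lies in $E(G)\subseteq E(G^2)$, and $w_iv_{i+1}$ lies in $E(G^2)$ because $w_iv_iv_{i+1}$ is a path of length two in $G$. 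The resulting closed walk visits each of the $\lceil n/2\rceil$ cycle vertices and each of the $\lfloor n/2\rfloor$ spike tips exactly once, yielding a Hamilton cycle in $G^2$.

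To construct the crown, I would split the vertex set randomly: take a uniform partition $V(G)=A\cup B$ with $|A|=\lceil n/2\rceil$ and $|B|=\lfloor n/2\rfloor$, and realize the cycle of the crown inside $G[A]$ and the spike matching inside the bipartite graph $G[A,B]$. Cycle-edges live in $E(G[A])$ and matching-edges in $E(G[A,B])$, so edge-disjointness is automatic; their union is the desired crown with $\lfloor n/2\rfloor$ spikes (and every $A$-endpoint of the matching is distinct since $|B|\le|A|$). The $B$-saturating matching in $G[A,B]$ is produced by Hall's theorem: by the Expander Mixing Lemma together with Chernoff concentration on random halves, any $S\subseteq B$ has $|N_G(S)\cap A|\ge|S|$ with plenty of slack, since $\lambda/d\le 0.001$ makes all relevant densities very close to their expectations. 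For small $S$ the Chernoff estimate on $|N_G(v)\cap A|$ already gives roughly $d/2$ neighbors in $A$ per vertex, and for larger $S$ the Expander Mixing Lemma gives $|N_G(S)\cap A|\ge(1-o(1))|A|$.

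The main obstacle is to show that $G[A]$ is Hamiltonian. Using Chernoff on the random half, with high probability every vertex of $A$ has $(1\pm o(1))d/2$ neighbors in $A$; applying the Expander Mixing Lemma of $G$ to pairs of subsets inside $A$, the induced subgraph $G[A]$ inherits strong vertex expansion and high connectivity. These are exactly the combinatorial hypotheses of the Hefetz--Krivelevich--Szab\'o Hamiltonicity criterion~\cite{HKS09}, which does not require an eigenvalue assumption on the subgraph itself and should therefore be applicable directly to $G[A]$. I expect this step to be the most delicate one, because the spectrum of $G[A]$ is not accessible from the spectrum of $G$, and one genuinely has to argue by combinatorial expansion: a small number of ``bad'' vertices whose $G[A]$-degree deviates significantly from $d/2$ will likely have to be handled separately, either by absorbing them into the cycle via a P\'osa rotation argument, or by conditioning the random partition to avoid any atypical local configurations.
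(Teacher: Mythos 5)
Your reduction from the spanning crown to a Hamilton cycle in $G^2$ is correct and is exactly the paper's reduction of Theorem~\ref{th1} to Theorem~\ref{th2}. The problem is in how you propose to build the crown. Fixing a random half $A$ in advance and then demanding a Hamilton cycle of the induced subgraph $G[A]$ makes that step essentially equivalent to the open Conjecture~\ref{conj1}: $G[A]$ is (combinatorially) just another pseudo-random-like graph with degree about $d/2$ and the same constant ratio $\lambda/d$, and no known criterion certifies its Hamiltonicity. In particular the Hefetz--Krivelevich--Szab\'o theorem does not apply here: as the paper notes, when specialized to $(n,d,\lambda)$-graphs that criterion requires $d/\lambda$ to grow roughly like $\log n/\log\log n$, because it needs small sets to expand by a factor growing with $n$, whereas constant $d/\lambda$ only gives constant-factor expansion. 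If your step worked, $G$ itself would already be Hamiltonian and the paper would be moot. The paper's way around this is the key idea you are missing: it never finds a Hamilton cycle of a \emph{prescribed} induced subgraph; instead it constructs a cycle of \emph{prescribed length} $\lceil n/2\rceil$ on a \emph{flexible} vertex set (via Friedman--Pippenger embeddings of paths and ``double brooms'' whose leaf sets are joined by the Expander Mixing Lemma), which is a much weaker demand, and only then releases the unused vertices to the other side of the bipartite matching.

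A second, smaller gap: your Chernoff-plus-union-bound argument over a random partition needs $d\ge C\log n$ to control all $n$ vertex degrees simultaneously, while the theorem assumes only that $d$ exceeds an absolute constant. The paper circumvents this with the Lov\'asz Local Lemma (its ``matchmaker'' sets $S_1,S_2$), exploiting that the event that a given vertex gets too few neighbors of a given color depends on only $O(d^2)$ other such events. The matchmakers also solve a problem your sketch glosses over for the Hall condition: small subsets of $B$ need guaranteed minimum degree into $V(C)$, which is arranged by forcing $S_2$ onto the cycle and $S_1$ off it, not by degree concentration.
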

\noindent(As usually, the {\em square} $G^2$ of a graph $G$ is defined as a graph on the same vertex set as $G$, where two vertices $u,v\in V(G)$ are connected by an edge if their distance in $G$ is at most two.)

In order to prove the above theorem, we will argue about the existence of special structures in $G$ itself. Define a {\em crown with $k$ spikes} to be an edge-disjoint union of a cycle $C$ and a matching $M$ of size $k$ where each edge of $M$ has exactly one vertex in common with $C$. Observe that if $H$ is a crown with cycle $C=(v_1,\ldots,v_{\ell},v_1)$ and matching $M=\{v_{i_1}u_1,\ldots, v_{i_k}u_k\}$, then for every spike $v_{i_j}u_j$ the distance in $H$ between $u_j$ and $v_{i_j+1}$ is two. Hence inserting  $u_j$ right after $v_{i_j}$ in the natural order of the cycle stated above produces a cyclic order $\sigma$ of all vertices of $H$ where every pair of adjacent vertices are at distance at most two in $H$. This order certifies the Hamiltonicity of $H^2$. Hence, Theorem \ref{th1} follows immediately from:

\begin{thm}
\label{th2}
Let $G$ be an $(n,d,\lambda)$-graph, with $\lambda/d\le 0.001$ and $d$ large enough. Then $G$ contains a crown on $n$ vertices with $\lfloor n/2\rfloor$ spikes.
\end{thm}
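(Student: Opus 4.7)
Set $k = \lfloor n/2 \rfloor$. A crown on $n$ vertices with $k$ spikes is a cycle $C$ on $\lceil n/2 \rceil$ vertices together with a matching $M$ of size $k$ attaching each vertex of $V(G)\setminus V(C)$ to a distinct vertex of $V(C)$. My plan is to fix a balanced partition $V(G)=A\sqcup B$ with $|A|=\lceil n/2\rceil$, and then construct a Hamilton cycle of $G[A]$ (playing the role of $C$) together with a matching in $G[A,B]$ saturating $B$ (playing the role of $M$). The workhorse is the Expander Mixing Lemma, which under $\lambda/d\le 10^{-3}$ yields the co-neighborhood bound
\[
|S|\cdot|V\setminus(S\cup N(S))|\le (\lambda n/d)^2\le 10^{-6}n^2\qquad\text{for every }S\subseteq V(G),
\]
together with the spectral bound $\sum_v(|N(v)\cap A|-d|A|/n)^2\le \lambda^2|A||B|/n$, obtained by projecting the indicator of $A$ onto the top eigenvector. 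The latter implies that for any balanced partition, at most $O(\lambda^2 n/d^2)=O(10^{-6}n)$ vertices deviate substantially from having about $d/2$ neighbors in $A$.

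\emph{Stages I--II (partition and matching).} Start from any balanced $A$; the spectral bound lets us absorb the few exceptional vertices via swaps so that every $v\in V(G)$ ends up with $|N(v)\cap A|\ge d/4$ and $|N(v)\cap B|\ge 1$ (with $d$ large enough to dominate the swaps). Hall's condition for the bipartite graph $G[A,B]$ then falls out of EML: if some $X\subseteq B$ violated Hall, setting $Y=A\setminus N(X)$ would give $e(X,Y)=0$, hence $|X|\cdot|Y|\le 10^{-6}n^2$; combined with $|Y|\ge |A|-|X|+1$, this forces $|X|$ to be either very small (contradicting the min-degree-and-codegree estimates in $A$) or very close to $|A|$ (contradicted symmetrically via the min-degree into $B$). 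Hall's theorem then delivers a matching saturating $B$.

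\emph{Stage III (Hamilton cycle of $G[A]$).} Run a P\'osa rotation--extension argument inside $G[A]$. The expansion $|N_{G[A]}(S)|\ge 2|S|$ for $|S|\le |A|/3$ follows from the restricted co-neighborhood bound over a wide middle range of sizes, and from the min-degree lower bound (together with codegree estimates controlling neighborhood overlaps) at smaller sizes. Connectivity of $G[A]$ follows similarly from EML. Standard P\'osa rotations then either extend any non-Hamiltonian path or force a booster edge among the many rotation endpoints produced by expansion, ultimately closing a Hamilton cycle.

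\textbf{Main obstacle.} Stage III is the bottleneck. Every existing Hamiltonicity theorem for $(n,d,\lambda)$-graphs requires $d/\lambda\to\infty$ with $n$, whereas here we only have $d/\lambda$ a (large) constant. Fortunately the expansion inequalities driving P\'osa's argument depend on $\lambda/d$ rather than on $n$, so the classical argument should apply at constant ratio once $d$ exceeds an absolute threshold. The delicate range is set-sizes between $O(1)$ and $\Theta(\lambda^2 n/d^2)$, where neither EML nor a single-vertex spectral bound is individually sharp and where one must combine the min-degree from Stage I with average-codegree estimates. Should direct Hamiltonicity of $G[A]$ prove too much to ask at constant ratio, a natural fallback is to settle for a near-spanning cycle of $G[A]$ and absorb the few missing vertices of $A$ as extra spikes, exploiting precisely the extra slack that a crown affords over a Hamilton cycle.
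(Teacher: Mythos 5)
There is a genuine gap, and it sits exactly where you flag it: Stage III. A crown on $n$ vertices with $\lfloor n/2\rfloor$ spikes forces the cycle to have length exactly $\lceil n/2\rceil$ (since the spikes attach to distinct cycle vertices, the cycle must contain at least half the vertices, and the vertex count then pins its length down). Your plan therefore requires a Hamilton cycle of $G[A]$ for a prescribed balanced set $A$ --- a spanning cycle of an induced subgraph of a constant-ratio $(n,d,\lambda)$-graph. This is not a routine application of P\'osa rotation--extension: at constant $d/\lambda$ the rotation argument produces long paths and linear-sized endpoint sets, but \emph{closing} the cycle and then iterating to absorb the remaining vertices is precisely the step that all known proofs pay a growing ratio for (the best current bounds need $d/\lambda\gtrsim(\log n)^{1/3}$, or $d\ge n^{\alpha}$). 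Your claim that ``the classical argument should apply at constant ratio once $d$ exceeds an absolute threshold'' would, if true, essentially resolve Conjecture~\ref{conj1}; the obstruction is not the size of $d$ but the strength of the expansion, which depends only on $\lambda/d$ and does not improve as $d$ grows. Your fallback (a near-spanning cycle of $G[A]$ plus extra spikes) does not rescue the statement either: if the cycle misses $m\ge 1$ vertices of $A$, you would need $\lfloor n/2\rfloor+m$ spikes on a cycle of only $\lceil n/2\rceil-m$ vertices, which is impossible; any spanning crown needs its cycle to cover at least half the vertices.

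The paper avoids this trap entirely by never fixing the cycle's vertex set in advance and never invoking Hamiltonicity of any induced subgraph. It builds a cycle of length exactly $\lceil n/2\rceil$ constructively, as two ``double brooms'' (paths with complete binary trees attached at both ends) glued together by two edges between the leaf sets; the brooms are embedded via the Friedman--Pippenger theorem with rollbacks and Haxell-type tree embedding, which \emph{do} work at constant ratio because they only require embedding bounded-degree trees, with enormous freedom in which vertices get used. The exact cycle length is dialed in by choosing the path lengths so that $\ell_1+\ell_2+4t+2=\lceil n/2\rceil$. The only constraints on the cycle's vertex set are soft ones --- it must contain one small ``matchmaker'' set $S_2$ and avoid another, $S_1$, both produced by the Local Lemma --- and these are exactly what make Hall's condition work on both sides of the final bipartite matching. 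Your Stages I--II (degree balancing plus Hall via the Expander Mixing Lemma) are in the right spirit and broadly parallel the paper's Stage 6, but without a sound replacement for Stage III the proof does not go through.
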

\noindent(Obviously finding a spanning crown in $G$ with an arbitrary number of spikes suffices to derive that $G^2$ is Hamiltonian.)

The constant $0.001$ in the above statements is certainly suboptimal and can be tightened through a more careful (and probably more tiring, both for the author and the reader) implementation of the same arguments. We however see little point in fighting for its possible value here, preferring simplicity and readability instead.

The notation used in the paper is fairly standard/self-explanatory. In particular, for a graph $G=(V,E)$ and disjoint vertex subsets $U,W\subset V$ we denote by $N(U,W)$ the set of neighbors of $U$ in $W$ and set $N(U)=N(U,V\setminus U)$. We also denote by $\Gamma(U)$ the set of all vertices in $V$ (including those in $U$) having a neighbor in $U$. Similarly, for a subset $W$, not necessarily disjoint from $U$, we let $\Gamma(U,W)$ be all vertices in $W$ having a neighbor in $U$, so that $\Gamma(U)=\Gamma(U,V)$. We omit rounding signs systematically so as to improve readability.

We set
\begin{eqnarray*}
\delta &=& 0.001\,,\\
\delta_1 &=& 48\delta= 0.048\,.
\end{eqnarray*}

The paper is structured as follows. In Section \ref{s-outline} we provide an outline of our proof, hoping it would be helpful for the reader when parsing the main arguments. In the following section, Section \ref{s2}, we present a set of tools used in the proof of our main result. The proof of Theorem \ref{th2} is then given in Section \ref{s-proof}. The final section, Section \ref{s-concl}, is devoted to concluding remarks and open questions.

This work draws its inspiration, both ideological and technical, from several prior results about embedding problems in random and pseudo-random graphs, most notably \cite{AKS07,Mon19,DKN22}.

\section{Outline of the proof}\label{s-outline}
Let us outline our argument in this section. At the last step of the proof we will be looking for a nearly perfect matching $M$ between an already constructed cycle $C$ of length $\lceil n/2\rceil$ and its complement $V-C$; the matching's edges will serve as the spikes of the crown. If we are to hope this matching exists, we need to ensure that the degrees on both sides of the bipartite graph between $C$ and $V-C$ are positive, or better yet, the relevant bipartite graph is a reasonably good expander. Taking care of side $C$ is relatively easy --- we just find and put aside a vertex subset $S_1$ of small linear size so that every vertex in the graph has $\Theta(d)$ neighbors in $S_1$. (Adopting Montgomery's nice terminology from \cite{Mon19}, we call $S_1$ and other sets of a similar type {\em matchmakers}.) Arguing about the existence of such a set is done through the Local Lemma, see Section \ref{s2-3}. We then embed $C$ in $V-S_1$, thus making sure the set $S_1$ stays outside of $C$. The vertices of $V-S_1$ not used eventually in the embedding of $C$ are then released to the other part of the bipartite graph.

Taking care of positive degrees of the vertices of $V-C$ into $C$ is more challenging. In order to achieve this goal, we get another matchmaker set $S_2$, disjoint from $S_1$ and again with $\Theta(d)$ neighbors for every vertex of the graph, and make sure this set is fully immersed in cycle $C$. For this to happen, we first embed $S_2$ into a path $P_1$ of length $O(\delta)n$ inside $V-S_1$. To accomplish the latter task, we use a very powerful and flexible embedding technique based on the Friedman-Pippenger theorem \cite{FP87}, combined with the idea of rollbacks  due to Johannsen \cite{Johan}, described also in the PhD thesis of Glebov \cite{Gle13} and  attributed there to Glebov, Johannsen and the author of the present paper. This combination allows to gradually embed large trees in expanding graphs starting with a good embedding of the empty forest of $|S_2|$ vertices. At each iteration yet another part of the tree is embedded, then a connecting edge between appropriate parts of the embedded piece is found. Then we roll back vertex by vertex large parts of the embedded piece not used in the embedding, while still preserving the quality and extendability of the whole embedded structure.  We do it pretty much in the style of \cite{DKN22}, see Section \ref{s2-4} for the formal description of the relevant tools. In order to pull this all through, we need to guarantee that the induced subgraph $G[V-S_1]$ is a good expander, including expansion of the set $S_2$ and its subsets into $V-S_1$. To ensure the required expansion property, we create and use yet another matchmaker $S_3$, disjoint from $S_1$ and $S_2$ and again well connected to every vertex of the graph.

Once we find such a path $P_1$ of length $\ell_1$ containing $S_2$, we extend it to what we call a double broom $T_{\ell_1,t}$, which is a path of length $\ell_1$ with two disjoint complete binary trees of depth $t$, each attached to the endpoint of $P_1$; this tree has $O(\delta)n$ vertices. The same embedding technique based on Friedman-Pippenger is employed here. Then we find another double broom $T_{\ell_2,t}$ in $V-S_1$ and disjoint from  $T_{\ell_1,t}$; standard approaches to embedding large trees in expanding graphs \cite{Hax01,BCPS10} are invoked, see Section \ref{s2-5} for a formal statement. This step requires some cleaning of the subgraph at hand in order to find a large induced expander in $V-S_1-V(T_{\ell_1,t})$; this is pretty straightforward and is done formally through the statement in Section \ref{s2-2}.

The parameters of the second tree $T_{\ell_2,t}$ are chosen so that $\ell_1+\ell_2+4t+2=\lceil n/2\rceil$. If this is the case, then finding two edges of $G$ connecting the left, resp. right, broom of the first tree with the left, resp. right, broom of the second tree closes a cycle of length exactly $\lceil n/2\rceil$. This is because in the double broom with parameters $\ell$ and $t$ every leaf of the first attached tree is connected by a path of length $\ell+2t$ to every leaf of the second attached tree. The required two edges exist due to standard edge distribution properties of the pseudo-random graph derived from the Expander Mixing Lemma, see Section \ref{s2-1}. This way we obtain a cycle $C$ of the required length, and containing the second matchmaker $S_2$ in full, as desired.

Now we are finally back to arguing about the existence of a matching $M$ of size $|M|=\lfloor n/2\rfloor$ between  $C$ and $V-C$. We arrived at this point fully prepared, having matchmakers $S_1$ and $S_2$ on either side of the bipartite graph. These matchmakers guarantee expansion of relatively small sets due to the minimum degree; larger sets take care of themselves as any set of size $\delta n$ in $G$ sees all but less than $\delta n$ vertices outside. The details are handled using the tools in Section \ref{s2-1}. These expansion properties suffice to find a desired matching, thus attaching the required spikes to our crown and completing the proof.

\section{Main tools}\label{s2}
In this section we gather main tools to be applied in the proof. Throughout this section, we assume that $G=(V,E)$ is an $(n,d,\lambda)$-graph with $\lambda/d\le \delta$.

We will use the following standard definition of expansion.
\begin{definition}\label{def:expanding}
	Let $s \in \mathbb{N}$ and $K > 0$. We say that a graph $G$ is \emph{$(s, K)$-expanding} if for every subset $X \subseteq V(G)$ of size $|X| \le s$ we have $|N(X)| \ge K |X|$.
\end{definition}

\subsection{Expander Mixing Lemma and its consequences}\label{s2-1}
In order to bridge between graph eigenvalues and edge distribution in $(n,d,\lambda)$-graphs, we use the famed Expander Mixing Lemma due to Alon and Chung \cite{AC88} (see also Theorem 2.11 of \cite{KS06}). For every two (not necessarily disjoint) subsets $B,C\subseteq V$ , let $e(B,C)$ denote the number of ordered pairs $(u,v)$ with $u\in B,v\in C$ such that $uv$ is an edge of $G$. Note that if $u,v\in B\cap C$, then the edge $uv$ contributes 2 to $e(B,C)$. In this notation,
\begin{equation}\label{xml}
\left| e(B,C)-\frac{|B||C|d}{n}\right|<\lambda\sqrt{|B||C|}\,.
\end{equation}
We can derive from this statement quantitative estimates for the expansion of such $G$.

\begin{lemma}\label{le1}
For every $U\subseteq V(G)$ and for every $X\subset U$ with $|X|\ge \delta n$, one has: $|N(X,U)|> |U|-|X|-\delta n$.
\end{lemma}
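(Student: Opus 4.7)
The plan is to interpret $N(X,U)$ as the set of vertices in $U\setminus X$ having a neighbor in $X$, and then show that its complement in $U\setminus X$ (the vertices of $U\setminus X$ with no neighbor in $X$) has size strictly less than $\delta n$. This will immediately yield the desired lower bound, since then
\[
|N(X,U)| \;=\; |U\setminus X| - |(U\setminus X)\setminus N(X,U)| \;>\; |U|-|X|-\delta n.
\]

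To bound the size of the set $Y:=(U\setminus X)\setminus N(X,U)$, I would simply note that $Y$ has no edges to $X$ by definition, and then feed the pair $(X,Y)$ into the Expander Mixing Lemma (\ref{xml}). Since $X\cap Y=\emptyset$, the count $e(X,Y)$ is the usual number of edges between the two sets, and it equals zero. Thus (\ref{xml}) gives
\[
\frac{|X|\,|Y|\,d}{n} \;<\; \lambda\sqrt{|X|\,|Y|},
\]
which rearranges to $\sqrt{|X|\,|Y|}<\lambda n/d \le \delta n$, i.e.\ $|X|\,|Y|<\delta^2 n^2$. Using the hypothesis $|X|\ge \delta n$, this forces $|Y|<\delta n$, as required. (The degenerate case $|Y|=0$ is trivial and needs no bound.)

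There is no real obstacle here: the lemma is a direct, almost routine, unpacking of the Expander Mixing Lemma under the hypothesis $\lambda/d\le \delta$. The only minor subtlety is interpreting the notation $N(X,U)$ correctly when $X\subset U$ (so that $X$ and $U$ are not disjoint as in the definition given in the introduction); this is resolved by reading $N(X,U)$ as neighbors of $X$ lying in $U\setminus X$, which is both the natural reading and the one consistent with the target inequality $|U|-|X|-\delta n$ on the right-hand side.
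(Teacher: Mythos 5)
Your proof is correct and follows essentially the same route as the paper: both arguments boil down to applying the Expander Mixing Lemma to $X$ (or a $\delta n$-subset of it) against its set of non-neighbors in $U$ and using $e=0$ to force that set to have size below $\delta n$. Your version is in fact slightly more streamlined, applying (\ref{xml}) directly to $X$ and the non-neighbor set rather than first passing to a subset $X_0$ of size exactly $\delta n$ as the paper does.
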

\begin{proof}
Choose $X_0\subseteq X$, $|X_0|=\delta n$. Then for every $Y\subseteq U$ of cardinality $|Y|=\delta n$, we have by (\ref{xml}):
$$
e(X_0,Y)>\frac{|X_0||Y|d}{n}-\lambda\sqrt{|X_0||Y|}=\frac{\delta^2n^2d}{n}-\lambda\delta n\ge \delta^2dn-\delta^2dn=0\,,
$$
implying that $X_0$ has a neighbor in $Y$. It thus follows that $|N(X_0,U)|>|U|-|X_0|-\delta n$. Hence, $|N(X,U)|\ge |N(X_0,Y)|-|X-X_0|> |U|-|X|-\delta n$.
\end{proof}

The following lemma guarantees that induced subgraphs of $G$ with relatively large minimum degree are good expanders.
\begin{lemma}\label{le2}
Let $S\subseteq V$ be such that every vertex $v \in V$ has at least $d_1$ neighbors in $S$, with $d_1\ge 2\delta d$. Then every subset $X\subset V$ with $|X|\le \delta n$ satisfies: $|\Gamma(X,S)|\ge \frac{d_1}{2\delta d}|X|$.
\end{lemma}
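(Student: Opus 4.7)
The plan is to argue by contradiction via a direct application of the Expander Mixing Lemma (\ref{xml}). Set $T := \Gamma(X, S)$, and suppose for contradiction that $|T| < \frac{d_1}{2\delta d}|X|$. The strategy is to estimate the ordered-pair count $e(X,T)$ from Section \ref{s2-1} in two incompatible ways.

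For the lower bound, by hypothesis every vertex of $V$, and in particular every $v \in X$, has at least $d_1$ neighbors in $S$, and all of these lie in $T = \Gamma(X,S)$ by definition. Summing $|N(v) \cap T| \ge d_1$ over $v \in X$ therefore gives $e(X,T) \ge d_1 |X|$. (A minor point: if $X \cap S \neq \emptyset$ then $T$ may overlap $X$, and edges inside $X \cap T$ are double-counted in the definition of $e(\cdot,\cdot)$; this only strengthens the lower bound.)

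For the upper bound, I would plug the contradictory assumption on $|T|$ directly into (\ref{xml}): $e(X,T) < \frac{|X||T|d}{n} + \lambda \sqrt{|X||T|}$. Using $|X| \le \delta n$, the main term $\frac{|X||T|d}{n}$ is bounded by $\frac{d_1 |X|^2}{2\delta n} \le \frac{d_1 |X|}{2}$; using $\lambda \le \delta d$ and $d_1 \ge 2\delta d$, the error term $\lambda \sqrt{|X||T|}$ is bounded by $|X|\sqrt{\delta d \cdot d_1/2} \le \frac{d_1 |X|}{2}$. Adding the two gives $e(X,T) < d_1 |X|$, contradicting the lower bound.

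There is no real obstacle here: this is a standard EML contradiction argument, and the two numerical hypotheses in the statement are tailored exactly so that $|X| \le \delta n$ controls the main EML term while $d_1 \ge 2\delta d$ controls the error term, each absorbing half of the budget $d_1 |X|$. The only thing one has to get right is the 50--50 split of that budget between the two EML terms, which forces the precise constant $\frac{d_1}{2\delta d}$ appearing in the conclusion.
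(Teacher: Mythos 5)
Your proof is correct and is essentially identical to the paper's: both set $Y=\Gamma(X,S)$, lower-bound $e(X,Y)$ by $d_1|X|$ using the degree hypothesis, and upper-bound it via the Expander Mixing Lemma under the contradictory assumption, with the hypotheses $|X|\le\delta n$ and $d_1\ge 2\delta d$ each absorbing half of the budget. The only cosmetic difference is that the paper phrases the error-term bound via $A=\frac{d_1}{2\delta d}\ge 1$ (so $\sqrt{A}\le A$), which is the same inequality you use directly.
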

\begin{proof}
Let $x=|X|\le \delta n$, and denote $Y=\Gamma(X,S)$; denote also $A=\frac{d_1}{2\delta d}$ and observe that $A\ge 1$.  If $|Y|< Ax$, then we have by (\ref{xml}):
$$
d_1x\le e(X,Y)< \frac{x\cdot Ax\cdot d}{n}+\lambda\sqrt{x\cdot Ax}\\,
$$
implying:
$$
d_1< \frac{Axd}{n}+\lambda\sqrt{A}\le \delta Ad+\delta\sqrt{A}d\le 2\delta Ad=d_1
$$
--- a contradiction.
\end{proof}

\subsection{Finding expanding subgraphs in large vertex subsets}\label{s2-2}
\begin{lemma}\label{le2-1}
Let $U\subseteq V$ be a subset of at least $n/2$ vertices. Then there exists a subset $U_0\subset U$, $|U_0|\ge |U|-\delta n$, such that for every $X\subset U_0$ with $|X|\le \delta n$, one has $|N(X,U_0)|\ge \frac{1-6\delta}{4\delta}|X|$.
\end{lemma}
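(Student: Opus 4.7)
The plan is to construct $U_0$ by iteratively pruning from $U$ the small subsets that witness the failure of the desired expansion. Set $K := (1-6\delta)/(4\delta)$, initialize $U_0 := U$, and while there exists $X \subseteq U_0$ with $|X| \le \delta n$ and $|N(X,U_0)| < K|X|$, remove one such $X$ from $U_0$. The process trivially terminates, and by the stopping condition the final $U_0$ satisfies the desired expansion property. The entire task therefore reduces to showing that the cumulative removed set $W := U \setminus U_0$ has size at most $\delta n$.

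Suppose for contradiction that $|W| > \delta n$. List the pruned subsets in order as $X_1, X_2, \ldots$, set $W_j := X_1 \sqcup \cdots \sqcup X_j$, and let $i$ be the smallest index with $|W_i| > \delta n$. Then $|W_{i-1}| \le \delta n$ and $|X_i| \le \delta n$, so $\delta n < |W_i| \le 2\delta n$. The subsets $X_j$ are pairwise disjoint by construction, and the final remainder $U \setminus W_i$ is contained in the remainder $U \setminus W_{j-1}$ out of which $X_j$ was pruned; combining the failure certificate $|N(X_j, U \setminus W_j)| < K|X_j|$ at each step with a union bound yields
$$
|N(W_i, U \setminus W_i)| \le \sum_{j=1}^{i} |N(X_j, U \setminus W_i)| < K\sum_{j=1}^{i}|X_j| = K|W_i|.
$$
Meanwhile Lemma \ref{le1}, applied with $X = W_i$ (valid since $|W_i| \ge \delta n$), supplies the matching lower bound $|N(W_i, U)| > |U| - |W_i| - \delta n$. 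Combining, $(K+1)|W_i| > |U| - \delta n \ge n(1-2\delta)/2$, and plugging $K+1 = (1-2\delta)/(4\delta)$ produces $|W_i| > 2\delta n$, contradicting $|W_i| \le 2\delta n$.

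The only delicate step is the union-bound inequality. At step $j$, the set $X_j$ was pruned because it had few neighbors in the \emph{then-current} remainder, whereas the above estimate requires few neighbors in the \emph{final} remainder $U \setminus W_i$; but since the remainder only shrinks during the process, this stronger bound transfers for free, and pairwise disjointness of the $X_j$ makes the sum telescope to $K|W_i|$. The expansion constant $K = (1-6\delta)/(4\delta)$ has been calibrated precisely so that the two estimates pinch: $(|U|-\delta n)/(K+1) = 2\delta n$ when $|U| = n/2$, which is exactly the maximum possible size of $W_i$ at the first overshoot, making the contradiction just barely go through.
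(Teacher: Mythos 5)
Your proposal is correct and follows essentially the same route as the paper: the identical greedy pruning process, the observation that the union of pruned sets has few neighbors in the current remainder, and a contradiction via Lemma~\ref{le1} at the first moment the removed set exceeds $\delta n$, with the same calibration of $K=\frac{1-6\delta}{4\delta}$. The only difference is that you spell out the union-bound/monotonicity step that the paper dismisses as ``easy to see.''
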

\begin{proof}
The argument here is nearly identical to that of Lemma 4.1 in \cite{FK21}. Set $C=\frac{1-6\delta}{4\delta}$. We start with $W=\emptyset$, and for as long as there exists a subset $W_0\subset U\setminus W$ with $|W_0|\le \delta n$ and $|N(W_0,U\setminus W)|<C|W_0|$, we add $W_0$ to $W$. It is easy to see that at any point of this procedure we have $|N(W,U)|\le C|W|$. Assume $|W|$ reaches $\delta n$ at some point. Then at that point $\delta n\le |W|<2\delta n$, and by Lemma \ref{le1} we have $|N(W,U)|> |U|-|W|-\delta n$. It thus follows that
$$
|U|-|W|-\delta n < |N(W,U)|\le C|W|\,,
$$
implying $C>(|U|-|W|-\delta n)/|W|>(n/2-2\delta n-\delta n)/(2\delta n)= \frac{1-6\delta}{4\delta}$ --- a contradiction. Hence the above described cleaning process halts with $|W|\le \delta n$, and by its description the set $U_0=U\setminus W$ meets the lemma's requirement.
\end{proof}

\subsection{Splitting vertex degrees}\label{s2-3}
\begin{lemma}\label{le3}
For large enough $d$, the graph $G$ contains three vertex disjoint subsets $S_1,S_2,S_3$, each with at most $\delta_1n$ vertices, such that every vertex $v\in V$ has at least $\frac{\delta_1d}{4}$ neighbors in each of $S_i$'s.
\end{lemma}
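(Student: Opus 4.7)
The plan is to use the Lov\'asz Local Lemma on a random partition of $V$. Assign each vertex $v\in V$ independently to one of four bins $S_1,S_2,S_3,R$ with probabilities $p,p,p,1-3p$, where $p=\delta_1/2=24\delta$; since $3p=0.072<1$ this is a legitimate distribution and the sets $S_1,S_2,S_3$ are automatically pairwise disjoint.

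For each $v\in V$ and each $i\in\{1,2,3\}$, let $A_{v,i}$ be the local bad event $|N(v)\cap S_i|<\delta_1 d/4=pd/2$. Since $|N(v)\cap S_i|\sim\mathrm{Bin}(d,p)$ has mean $pd$, a standard Chernoff bound gives $\Pr[A_{v,i}]\le e^{-pd/8}$. For each $i$ let $B_i$ be the global size event $|S_i|>\delta_1 n=2pn$; again by Chernoff, $\Pr[B_i]\le e^{-pn/3}$.

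The LLL handles the local events: $A_{v,i}$ is determined solely by the bin assignments of the $d$ vertices of $N(v)$, so it is mutually independent of every $A_{u,j}$ with $N(u)\cap N(v)=\emptyset$, leaving a dependency degree of at most $3d^2$. The symmetric LLL condition $e\cdot 3d^2\cdot e^{-pd/8}\le 1$ then holds once $d$ is large, and the refined LLL inequality yields $\Pr\bigl[\bigcap_{v,i}\overline{A_{v,i}}\bigr]\ge\exp\bigl(-6n\,e^{1-pd/8}\bigr)$.

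The global events $B_i$ cannot be fed to the LLL directly, so we dispatch them with a union bound: $\Pr[B_1\cup B_2\cup B_3]\le 3e^{-pn/3}$. Since $n\ge d+1$, once $d$ is taken sufficiently large one checks that the LLL lower bound comfortably exceeds $3e^{-pn/3}$ (the exponent $6e^{1-pd/8}$ is driven below $p/6$ while $n$ remains at least linear in $d$). Hence some outcome avoids every $A_{v,i}$ and every $B_i$ simultaneously, producing the desired $S_1,S_2,S_3$. The only real obstacle is reconciling the purely local LLL with the global cardinality constraints, and this is precisely what the union-bound step accomplishes.
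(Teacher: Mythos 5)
Your proof is correct, and the core of it (random coloring, Chernoff for the degree events, Local Lemma with dependency degree $O(d^2)$) coincides with the paper's. Where you genuinely diverge is in handling the cardinality constraint $|S_i|\le\delta_1 n$. The paper sidesteps it deterministically: it colors with $K=2/\delta_1$ colors, applies the LLL only in its qualitative ``positive probability'' form, and then takes $S_1,S_2,S_3$ to be the three \emph{smallest} color classes, which forces $|S_i|\le n/(K-2)<\delta_1 n$ by averaging. You instead use a four-way partition and treat the size bounds as additional global bad events, which obliges you to invoke the quantitative lower bound $\Pr[\bigcap\overline{A_{v,i}}]\ge\prod(1-x_{v,i})$ from the general LLL and beat the union bound $3e^{-pn/3}$ with it. Your comparison of exponents is sound (using $n\ge d+1$ so that $n$ grows with $d$), and this ``LLL beats the union bound'' device is a legitimate standard technique; the paper's smallest-classes trick buys a shorter argument that never needs the lower-bound form of the LLL, while yours buys sets of the prescribed size directly without introducing the auxiliary parameter $K$. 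Both yield exactly the claimed degree $\delta_1 d/4$ into each $S_i$.
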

\begin{proof}
This is a simple consequence of the Lov\'asz Local Lemma (see, e.g., Chapter 5 of \cite{AS16}). Color the vertices of $G$ randomly and independently in $K$ colors, with $K=2/\delta_1$. For $v\in V$ and color $i\in [K]$, let $A_{v,i}$ be the event that $v$ has less than $d/2K$ neighbors of color $i$. By a standard Chernoff-type inequality, $\Pr[A_{v,i}]\le e^{-d/8K}$. Also, the event $A_{v,i}$ is independent of all events $A_{u,j}$ but those for which $u=v$ or $u$ and $v$ have a common neighbor; the number of such events is less than $Kd^2$. Recalling that we assume $d$ to be large enough, it follows by the Local Lemma that with positive probability none of the events $A_{v,i}$ holds. Fix such a coloring, and let $S_1,S_2,S_3$ be its smallest color classes (breaking ties arbitrarily) with $|S_1|\le |S_2|\le |S_3|$. Then $|S_3|(K-2)\le n$, implying $|S_1|,|S_2|,|S_3|\le \frac{n}{K-2}<\frac{2n}{K}=\delta_1 n$.
\end{proof}

\noindent{\bf Remark.} The proof above uses only the assumption that $G$ is $d$-regular. Hence the conclusion of Lemma \ref{le3} is valid for any $d$-regular graph.

\subsection{Friedman--Pippenger with rollbacks}\label{s2-4}
As we have mentioned, the Friedman--Pippenger theorem \cite{FP87} and its rollback version \cite{DKN22} provide a very powerful tool for gradual embedding of large tree-like structures in expanding graphs. Below we introduce these tools. Our notation and presentation follow closely that of \cite{DKN22}.

We will need the notion of an \emph{$(s,D)$-good} embedding.
\begin{definition}\label{def:goodness}
Let $G$ be a graph and let $s, D \in \mathbb{N}$. Given a graph $F$ with maximum degree at most $D$, we say that an embedding $\phi \colon F \hookrightarrow G$ is \emph{$(s,D)$-good} in $G$ if
\begin{equation} \label{eq:extendable}
	|\Gamma_G(X) \setminus \phi(F)| \ge \sum_{v \in X} \left[ D - \deg_F(\phi^{-1}(v)) \right]+|\phi(F)\cap X|
\end{equation}
for every $X \subseteq V(G)$ of size $|X| \le s$. Here we slightly abuse the notation by setting $\deg_F(\emptyset) := 0$, i.e. if a vertex $v\in V(G)$ is not used by $\phi$ to embed $F$, then we set $deg_F(\phi^{-1}(v))=0$.
\end{definition}

\begin{theo}[\cite{FP87, DKN22}] \label{thm:FP}
	Let $F$ be a graph with $\Delta(F) \le D$ and $|V(F)| < s$, for some $D, s \in \mathbb{N}$. Suppose we are given a $(2s-2,D{+}2)$-expanding graph $G$ and a $(2s-2,D)$-good embedding $\phi \colon F \hookrightarrow G$. Then for every graph $F'$ with $|V(F')| \le s$ and $\Delta(F') \le D$ which can be obtained from $F$ by successively adding a new vertex of degree $1$, there exists a $(2s-2,D)$-good embedding $\phi' \colon F' \hookrightarrow G$ which extends $\phi$.
\end{theo}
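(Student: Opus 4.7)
The natural approach is induction on the number $|V(F')|-|V(F)|$ of leaves still to be added, reducing the theorem to a single-step extension lemma: given a $(2s-2,D)$-good embedding $\phi\colon F\hookrightarrow G$ with $|V(F)|<s$ and a vertex $v\in V(F)$ with $\deg_F(v)\le D-1$, there exists $w\in\Gamma_G(\phi(v))\setminus\phi(F)$ such that mapping the new leaf $u$ to $w$ yields a $(2s-2,D)$-good embedding $\phi^*$ of $F^*=F\cup\{uv\}$. Iterating along the prescribed leaf order produces $\phi'$; the bound $|V(F')|\le s$ ensures that the hypothesis $|V(\cdot)|<s$ is maintained at every intermediate stage.

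To organise the single-step argument, introduce the defect
\[
f_\phi(X)\;=\;|\Gamma_G(X)\setminus\phi(F)|\;-\;\sum_{v'\in X}\bigl[D-\deg_F(\phi^{-1}(v'))\bigr]\;-\;|\phi(F)\cap X|,
\]
so that $(2s-2,D)$-goodness of $\phi$ says $f_\phi(X)\ge 0$ for all $X\subseteq V(G)$ with $|X|\le 2s-2$. A direct term-by-term comparison of $\phi$ with its extension $\phi^*(u)=w$ yields
\[
f_{\phi^*}(X)\;=\;f_\phi(X)\;-\;\mathbf{1}[w\in\Gamma_G(X)\setminus\phi(F)]\;+\;\mathbf{1}[\phi(v)\in X].
\]
Hence $\phi^*$ can fail to be good at a set $X$ only when $X$ is \emph{tight} for $\phi$ (i.e.\ $f_\phi(X)=0$), satisfies $w\in\Gamma_G(X)\setminus\phi(F)$, and avoids $\phi(v)$.

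Suppose for contradiction that every $w\in\Gamma_G(\phi(v))\setminus\phi(F)$ is bad; each such $w$ then comes with a tight witness $X_w$ not containing $\phi(v)$. The main technical step, and the real obstacle of the proof, is to aggregate these $X_w$ into a single small set violating the expansion hypothesis. Here I would exploit the submodularity of $f_\phi$: the closed-neighborhood term $X\mapsto|\Gamma_G(X)\setminus\phi(F)|$ is submodular while the remaining two terms are modular, so the family of tight sets is closed under union as long as the union still has size at most $2s-2$. By iterative merging of the $X_w$'s one produces a tight $X^*$ with $\phi(v)\notin X^*$ and $\Gamma_G(\phi(v))\setminus\phi(F)\subseteq\Gamma_G(X^*)\setminus\phi(F)$; adjoining $\phi(v)$ gives $X^{**}=X^*\cup\{\phi(v)\}$ whose expansion, read off from $f_\phi(X^{**})\ge 0$ together with $|\phi(F)|<s$, forces $|N(X^{**})|<(D+2)|X^{**}|$, contradicting the $(2s-2,D+2)$-expansion of $G$. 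Maintaining the size bound $|X^{**}|\le 2s-2$ throughout this aggregation is the delicate bookkeeping point, and it is exactly what the doubled radius $2s-2$ and the strengthened expansion degree $D+2$ in the hypothesis are designed to make possible.
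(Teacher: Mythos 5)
First, note that the paper does not prove Theorem \ref{thm:FP} at all --- it is imported verbatim from \cite{FP87,DKN22} --- so there is no internal argument to compare against; your proposal has to stand on its own. Its skeleton is the standard Friedman--Pippenger argument and is sound: the reduction to a single leaf-addition step, the identity $f_{\phi^*}(X)=f_\phi(X)-\mathbf{1}[w\in\Gamma_G(X)\setminus\phi(F)]+\mathbf{1}[\phi(v)\in X]$, and the observation that $f_\phi$ is submodular (coverage term minus modular terms), so that two tight sets whose union has size at most $2s-2$ have a tight union, are all correct.

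The genuine gap is the step you defer as ``delicate bookkeeping'': you never show that the unions formed during the aggregation stay within the range $|X|\le 2s-2$ where goodness (and hence tightness-closure) applies, and without that the argument collapses. The fix is not bookkeeping along the merging process but a one-shot size bound on tight sets: if $f_\phi(X)=0$ and $|X|\le 2s-2$, then $(D+2)|X|\le |N(X)|\le |\Gamma_G(X)|\le |\Gamma_G(X)\setminus\phi(F)|+|\phi(F)|\le D|X|+|X|+(s-1)$, whence $|X|\le s-1$. Thus every tight set has size at most $s-1$, any union of two tight witnesses has size at most $2s-2$, and by induction $X^*=\bigcup_w X_w$ is tight with $|X^*|\le s-1$. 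This computation is where the expansion hypothesis and $|V(F)|<s$ are actually consumed. Relatedly, your final contradiction is mis-aimed: with $\Gamma_G(\phi(v))\setminus\phi(F)\subseteq\Gamma_G(X^*)\setminus\phi(F)$ and $\deg_F(v)\le D-1$ one gets $f_\phi(X^{**})=f_\phi(X^*)-(D-\deg_F(v))-1\le -2$ for $X^{**}=X^*\cup\{\phi(v)\}$ with $|X^{**}|\le s\le 2s-2$, which contradicts the assumed $(2s-2,D)$-goodness of $\phi$ directly; trying instead to contradict $(D+2)$-expansion at $X^{**}$ does not go through, since $f_\phi(X^{**})<0$ only yields $|N(X^{**})|<(D+1)|X^{**}|+s-1$, which is weaker than $(D+2)|X^{**}|$ precisely when $|X^{**}|\le s-1$. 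So: supply the tight-set size bound, and route the final contradiction through goodness rather than expansion.
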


The second result we need is a simple corollary of the definition of $(s, D)$-goodness. While easy to prove, this observation \cite{Johan,DKN22} turns out to yield a very powerful method for embedding problems in expanding graphs, where one typically embeds a larger structure than desired at each iteration, fixes a part of it, and then rolls it back by erasing gradually the unused part, before proceeding to the next iteration while keeping a good embedding. It has also been utilized by Montgomery in \cite{Mon19} for embedding spanning trees in random graphs.

\begin{lemma}[\cite{DKN22}] \label{lemma:delete}
	Suppose we are given graphs $G$ and $F$ with $\Delta(F)\leq D$, and an $(s, D)$-good embedding $\phi \colon F \hookrightarrow G$, for some $s, D \in \mathbb{N}$. Then for every graph $F'$ obtained from $F$ by successively removing a vertex of degree $1$, the restriction $\phi'$ of $\phi$ to $F'$ is also $(s, D)$-good in $G$.
\end{lemma}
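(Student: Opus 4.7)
The statement reduces by iteration to the single-step case: if $\phi : F \hookrightarrow G$ is $(s,D)$-good and $w$ is a leaf of $F$, then $\phi' := \phi|_{F-w}$ is also $(s,D)$-good. My plan is to fix an arbitrary test set $X \subseteq V(G)$ with $|X| \le s$ and argue that the defining inequality \eqref{eq:extendable} for $\phi'$ on $X$ follows from the same inequality for $\phi$ on $X$, by tracking how each side changes when the single leaf is deleted.

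Write $u = \phi(w)$ and $u' = \phi(w')$, where $w'$ is the unique neighbor of $w$ in $F$. On the left-hand side, $\phi'(F') = \phi(F) \setminus \{u\}$, so $|\Gamma_G(X) \setminus \phi'(F')|$ exceeds $|\Gamma_G(X) \setminus \phi(F)|$ by exactly $1$ if $u \in \Gamma_G(X)$ and by $0$ otherwise. On the right-hand side, if $u \in X$ the degree defect at $u$ rises by $1$ (since $\deg_F(w) = 1$ drops to $\deg_{F'}(\emptyset) = 0$); if $u' \in X$ the defect at $u'$ rises by $1$ (since $w'$ lost its edge to $w$); and the corrective term $|\phi(F) \cap X|$ decreases by $1$ if $u \in X$ and is unchanged otherwise. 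These three contributions combine to a net increase on the right-hand side equal to $1$ if $u' \in X$ and $0$ otherwise.

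The inequality for $\phi'$ therefore follows from the one for $\phi$ provided the left-hand-side gain dominates the right-hand-side gain, i.e., in the only nontrivial case $u' \in X$ it suffices to verify $u \in \Gamma_G(X)$. This is immediate: $uu'$ is an edge of $G$, being the $\phi$-image of the edge $ww'$ of $F$, so $u$ is a neighbor of $u' \in X$. The argument is pure bookkeeping with no real obstacle; one need only be careful about the additive term $|\phi(F) \cap X|$ and the convention $\deg_F(\emptyset) := 0$ in Definition \ref{def:goodness}, which together produce the cancellation that makes the net right-hand-side change depend only on whether $u' \in X$.
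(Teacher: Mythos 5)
Your proposal is correct. The paper states this lemma as a cited result from \cite{DKN22} without reproducing a proof, and your single-leaf-deletion bookkeeping --- showing the left side of \eqref{eq:extendable} gains $1$ exactly when $u\in\Gamma_G(X)$, the right side gains $1$ exactly when $u'\in X$ (after the cancellation between the degree-defect term at $u$ and the term $|\phi(F)\cap X|$), and the former dominates the latter because $uu'\in E(G)$ --- is precisely the intended one-step verification, correctly iterated.
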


\subsection{Embedding nearly spanning trees}\label{s2-5}
The Friedman-Pippenger theorem, Theorem \ref{thm:FP}, is very handy for embedding moderate linear sized  bounded degree trees in expanders. In many circumstances however one needs to embed nearly spanning trees. This task was addressed explicitly in \cite{AKS07}. Here we use the following generalization of Friedman-Pippenger due to Haxell \cite{Hax01}. We state a version of it as given in \cite{BCPS10}.
\begin{theo}[\cite{BCPS10}]\label{th-BCPS}
Let $d$, $m$ and $M$ be positive integers. Assume that $H$ is a non-empty graph satisfying
the following two conditions:
\begin{enumerate}
\item For every $U\subseteq V(H)$ with $0 < |U| \le m$, $|N_H(U)| \ge d |U| + 1$;
\item For every $U\subseteq V(H)$ with $m < |U| \le 2m$, $|N_H(U)| \ge d |U| + M$.
\end{enumerate}
Then $H$ contains every tree $T$ with $M$ vertices and maximum degree at most $d$.
\end{theo}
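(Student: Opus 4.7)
The plan is to adapt the Friedman-Pippenger leaf-by-leaf embedding strategy, building a copy of $T$ in $H$ vertex by vertex while maintaining a goodness invariant tailored to the two scales $(m,1)$ and $(2m,M)$ appearing in the hypotheses.

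I would begin by fixing any ordering $v_1,\dots,v_M$ of $V(T)$ in which $v_i$, for $i\ge 2$, has a unique earlier neighbor $v_{p(i)}$ (a BFS order from any root works), and then process these vertices in order, maintaining at each step a partial embedding $\phi_i$ of the subtree $T_i=T[\{v_1,\dots,v_i\}]$ into $H$. I call $\phi_i$ \emph{good} if
\[
|N_H(U)\setminus \phi_i(V(T_i))|\ \ge\ \sum_{v\in U}\bigl(d-\deg_{T_i}(\phi_i^{-1}(v))\bigr)+c(|U|)
\]
holds for every $U\subseteq V(H)$ with $|U|\le 2m$, where $c(k)=1$ for $k\le m$ and $c(k)=M$ for $m<k\le 2m$ (with the convention $\deg_{T_i}(\emptyset)=0$). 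Plugging in the empty embedding $\phi_0$ reduces goodness exactly to conditions (1) and (2) of the theorem, so the starting embedding is good.

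The core step is a one-move extension lemma: any good $\phi_i$ with $i<M$ extends to a good $\phi_{i+1}$. To embed $v_{i+1}$, whose parent is mapped to $h=\phi_i(v_{p(i+1)})$, I would search for an unused neighbor $h'$ of $h$ at which placing $v_{i+1}$ preserves goodness. If some unused neighbor works, we are done. Otherwise every candidate is blocked by a tight set $U$ at which the goodness inequality becomes an equality, and I would perform a Friedman-Pippenger style rotation: the slack in the goodness inequality, applied to $U\cup\{h'\}$, forces the existence of a leaf $w$ of the currently embedded tree whose unembedding (in the spirit of Lemma \ref{lemma:delete}) frees the relevant neighborhood, after which $v_{i+1}$ can be placed and $w$ is re-embedded elsewhere using the first condition. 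A monovariant argument, tracking for example the total excess in the goodness inequality over all tight sets, guarantees that the sequence of rotations terminates.

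The main obstacle is the bookkeeping that shows the rotations do preserve goodness throughout. The $+1$ slack in condition (1) supplies the single free vertex needed at small scales to pivot on, while the $+M$ slack in condition (2) is essential at medium scales: since $T$ is near-spanning in the intended applications, the unembedded portion of $H$ can become small compared to $\phi_i(V(T_i))$, so a blocking $U$ of size between $m$ and $2m$ may contain nearly all of $\phi_i(V(T_i))$ and thus drain almost $d|U|$ from $|N_H(U)|$ via the already-used half-edges. Only the $+M$ surplus, large enough to absorb every already-placed vertex, can keep the invariant alive at that scale. This extra expansion for medium-sized sets is precisely the strengthening Haxell introduced over the original Friedman-Pippenger theorem, and it is where the bulk of the technical care must go.
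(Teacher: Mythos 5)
First, a point of comparison: the paper does not prove this statement at all --- Theorem \ref{th-BCPS} is imported as a black box from \cite{BCPS10} (ultimately from Haxell \cite{Hax01}), so your attempt has to stand on its own as a proof of Haxell's theorem. Your plan does identify the right general shape of that proof: a greedy leaf-by-leaf embedding governed by a two-scale goodness invariant, with the $+M$ slack at scale $(m,2m]$ serving to keep tight (``critical'') sets small because the embedded tree never occupies more than $M$ vertices; and your check that the empty embedding $\phi_0$ is good is correct.

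Nevertheless there is a genuine gap, and it sits exactly where the theorem is hard. (i) Your ``core step'' cannot be a clean one-move extension lemma in the style of Theorem \ref{thm:FP}: that theorem requires the embedded forest to have fewer than $s$ vertices while expansion is assumed up to $2s-2$, whereas here $M$ may vastly exceed $2m$, which is precisely why backtracking is unavoidable. Once rotations are admitted, ``good $\phi_i$ extends to good $\phi_{i+1}$'' is no longer the induction hypothesis, and you never say what replaces it. (ii) The rotation step is not an argument as written: you do not specify which leaf $w$ is removed, why its removal unblocks the parent vertex $h$ (removal preserves goodness by the analogue of Lemma \ref{lemma:delete}, but that by itself frees nothing in $N_H(h)$), where $w$ is re-embedded, or why re-embedding $w$ is not itself blocked --- taken literally the step is circular. (iii) The termination claim is unsupported: ``the total excess in the goodness inequality over all tight sets'' is not shown to be a well-defined, bounded, monotone quantity, and in Haxell's actual proof the choice of potential function and of which leaf to delete is the entire technical content. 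You explicitly flag this bookkeeping as ``the main obstacle'' without carrying it out, so what you have is a plausible reading plan for \cite{Hax01} rather than a proof.
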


\section{Proof of Theorem \ref{th2}}\label{s-proof}
In this section we prove our main result, Theorem \ref{th2}. We follow the outline of the proof as presented in Section \ref{s-outline}.

\medskip

\noindent{\bf Stage 1: three matchmakers are born.}\\
We apply Lemma \ref{le3} to $G$ to find three disjoint subsets $S_1,S_2,S_3$ of sizes $|S_i|\le \delta_1n$, with every vertex of $G$ having at least $\frac{\delta_1d}{4}$ neighbors in each of $S_i$'s. The set $S_1$ will serve as a matchmaker for the vertices on the cycle $C$ in the crown, the set $S_2$ will be a matchmaker for the vertices outside of $C$, and finally the set $S_3$ will serve as an internal matchmaker ensuring the expansion in the subgraph spanned by $V\setminus S_1$.

\medskip

\noindent{\bf Stage 2: the second matchmaker gets swallowed by a serpent.}\\
 Denote
 $$V_1=V\setminus S_1\,.
 $$
 Our goal at this stage is to find a path (a serpent) $P_1$ of length $O(\delta)n$ in $V_1$ containing $S_2$ in full; moreover, since we are to extend this path into a double broom at the next stage, we need the eventual embedding of the path to be good in the sense of Definition \ref{def:goodness}. At this and the following stages we sometimes do not distinguish between a forest and its copy already embedded in $V_1$.

Observe first that by our choice of the set $S_3$ and by Lemma \ref{le2} with $d_1=\frac{\delta_1d}{4}=12\delta d$ and $S_3$ in place of $S$, we have that $|\Gamma(X,S_3)|\ge 6|X|$ for every subset $X\subset V_1$ of at most $\delta n$ vertices. Also, by Lemma \ref{le1}, for sets $X\subset V_1$ of cardinalities $\delta n<|X|\le \frac{(1-\delta)n}{6}$,  we have $|N(X,V_1)|\ge 5|X|$. Hence the graph $G[V_1]$ is a $\left(\frac{(1-\delta)n}{6},5\right)$-expander.

Let $F$ be a forest on $|S_2|$ vertices with no edges, and fix an arbitrary bijection $\phi_0: F \hookrightarrow S_2$. We first need to verify that $\phi_0$ is a $\left(\frac{(1-\delta)n}{6},3\right)$-good embedding in $G[V_1]$. Recalling Definition \ref{def:goodness}, it is enough to check that $|\Gamma(X,V_1\setminus S_2)|\ge 4|X|\ge 3|X|+|X\cap S_2|$ for every $X\subset V_1$ of size $|X|\le \frac{(1-\delta)n}{6}$. Consider first the case where $|X|\le \delta n$. There we get: $|\Gamma(X,V_1\setminus S_2)|\ge 6|X|$. For $\delta n<|X|\le \delta_1n$, we have  by Lemma \ref{le1}: $|N(X,V_1)|\ge |V_1|-|X|-\delta n\ge |V_1|-49\delta n\ge 6\delta_1n$. Finally, for $\delta_1n<|X|\le \frac{(1-\delta)n}{6}$, we obtain due to expansion: $|N(X,V_1)|\ge 5|X|$, implying $|\Gamma(X,V_1\setminus S_2)|\ge 4|X|$, as required (with room to spare). We notice with foresight that this should allow embedding of forests of size up to $\frac{(1-\delta)n}{12}$ in $V_1$, using the tools of Section \ref{s2-4}.

We now perform an iterative procedure, gradually merging the vertices of $S_2$ into one path. Let $|S_2|=s_2$. We initialize $F_{s_2}=F$, $E_{s_2}=\emptyset$. Now, for $i=s_2$ down to 2 we repeat as follows. Assume that at the beginning of step $i$ we have in $G[V_1]$ a family ${\cal P}$ of $i$ vertex disjoint paths covering $S_2$ with all endpoints in $S_2$, and a set $E_i$ of $|E_i|=s_2-i$ edges in the union of these paths such that deleting $E_i$ from the union of the paths in ${\cal P}$ gives a $\left(\frac{(1-\delta)n}{6},3\right)$-good embedding of forest $F_i$ in $G[V_1]$. We also assume that $|V(F_i)|\le s_2+\sum_{j=i+1}^{s_2}\max\left\{0,\log_2\frac{6\delta n}{i}\right\}$. These assumptions clearly hold for $i=s_2$. We choose one endpoint from every path in ${\cal P}$ to form a set $X_i$ of cardinality $|X_i|=i$. The set $X_i$ is split into nearly equal parts $X_{i1},X_{i2}$ of cardinalities at least $\lfloor i/2\rfloor$ each. We extend $F_i$ into a forest $F_{i+1}'$ as follows. We grow disjoint complete binary trees of height $t_i$ from every vertex of $X_i$, where
$$
t_i=\min\left\{t: \left\lfloor \frac{i}{2}\right\rfloor\cdot 2^t\ge \delta n\right\}\,;
$$
we have $t_i\le \max\left\{0,\log_2\frac{6\delta n}{i}\right\}$. It follows that $|V(F_{i+1}'|\le |V(F_i)|+4\delta n$. We assume -- and verify it later -- that $|V(F_{i+1}')|\le \frac{(1-\delta)n}{12}$. Also, $F_{i+1}'$ can be obtained from $F_i$ by successively adding a new vertex of degree 1. Hence by Theorem \ref{thm:FP} a $\left(\frac{(1-\delta)n}{6},3\right)$-good embedding of $F_i$ in $G[V_1]$ can be extended to a $\left(\frac{(1-\delta)n}{6},3\right)$-good embedding of $F_{i+1}'$. Observe that by the definition of $t_i$, the sets of leaves of all trees grown from $X_{ij}$, $j=1,2$, have at least $\delta n$ vertices each. Therefore, after the embedding of $F_{i+1}'$ in $G[V_1]$, the graph $G$ has an edge $e_i$ between (the images of) these two sets of leaves by Lemma \ref{le1}. This edge closes a path $P_i$ of length $2t_i+1$ between a vertex in $X_{i_1}$ and a vertex in $X_{i_2}$ in $G[V_1]$. We now add the edges of $P_i$ but $e_i$ to $F_i$ to form a new forest $F_{i+1}$. Finally, we roll back all other edges of these binary trees we constructed, as described in Lemma \ref{lemma:delete}, to obtain a $\left(\frac{(1-\delta)n}{6},3\right)$-good embedding of $F_{i+1}$ in $G[V_1]$. The path $P_i$ replaces then two paths in ${\cal P}$ whose endpoints are connected by $P_i$, thus reducing the total number of paths in ${\cal P}$ by one. We also add $e_i$ to $E_i$ to form $E_{i+1}$. This completes step $i$.

It remains to estimate from above the total number of vertices ever consumed by this embedding. (Notice that the number of vertices in the forests $F_i, F_i'$ goes up and down, due to our forth and back moves of embedding and rolling back.) We can cap this number by
\begin{eqnarray*}
&& s_2+2\sum_{i=2}^{s_2} \max\left\{0,\log_2\frac{6\delta n}{i}\right\} + 4\delta n\\
&\le& \delta_1n+2\sum_{i=2}^{6\delta n}\log_2\frac{6\delta n}{i}+4\delta n\\
&=&\delta_1n+12\delta n\cdot\log_2(6\delta n)-2\sum_{i=2}^{6\delta n}\log_2i+4\delta n\\
&\le& \delta_1n+\frac{12\delta n\cdot\ln(6\delta n)}{\ln 2}-2\left(\frac{6\delta n\ln(6\delta n)-6\delta n}{\ln 2}\right)+4\delta n\\
&=& \delta_1n+\frac{12\delta n}{\ln 2}+4\delta n<70\delta n<\frac{(1-\delta)n}{12}\,.
\end{eqnarray*}
This estimate certifies that the above described embedding procedure can indeed be pulled through using Theorem \ref{thm:FP} and Lemma \ref{lemma:delete}.

By the end of this stage, we have a path $P_1$ in $G[V_1]$ on length $\ell_1\le 70\delta n$ vertices with endpoints $a_1,b_1\in S_2$, and a set of edges $E_1$ of cardinality $|E_1|=s_2-1$ inside it, so that $S_2\subseteq V(P_1)$, and the forest $F_1$ obtained from $P_1$ by deleting the edges of $E_1$ (and with $V(F_1)=V(P_1)$) is a $\left(\frac{(1-\delta)n}{6},3\right)$-good embedding in $G[V_1]$.

\medskip

\noindent{\bf Stage 3: the serpent morphs into a double broom.}\\
A {\em double broom} $T_{\ell,t}$ with parameters $\ell$ and $t$ is obtained by joining the roots of two disjoint complete binary trees of depth $t$ by a path of length $\ell$. Our goal at this stage is to extend the path $P_1$ of length $\ell_1$ with endpoints $a_1,b_1$ into a double broom $T_{\ell_1,t}$ in $G[V_1]$. We set $t$ to be the minimal integer such that a complete binary tree of depth $t$ has at least $\delta n$ leaves. Then $T_{\ell_1,t}$ has at most $78\delta n$ vertices.

Recall that and the forest $F_1$ obtained from $P_1$ by deleting the edges of $E_1$ (and with $V(F_1)=V(P_1)$) is a $\left(\frac{(1-\delta)n}{6},3\right)$-good embedding in $G[V_1]$. Also, $G[V_1]$ is a $\left(\frac{(1-\delta)n}{6},5\right)$-expander.
The graph derived from $F_1$ by growing complete binary trees of depth $t$  at both $a_1$ and $b_1$, where the trees are disjoint from each other and from $V(F_1)=V(P_1)$, has maximum degree 3 and can be obtained from $F_1$ by successively adding vertices of degree 1.
Hence by Theorem \ref{thm:FP} the graph $G[V_1]$ contains a copy of this graph. Putting back the edges of $E_1$, we obtain a copy $T_1$ of the double broom $T_{\ell_1,t}$ in $G[V_1]$, with all vertices of $S_2$ contained in this copy (in fact in $P_1$).

\medskip

\noindent{\bf Stage 4: another double broom is created.}\\
Let $U=V_1-V(T_1)$. Then $|U|\ge n-\delta_1n-78\delta n=n-126\delta n$. Set
$$
\ell_2=\lceil n/2\rceil-\ell_1-4t-2\,.
$$
Our goal here is to find a copy of the double broom $T_{\ell_2,t}$ in $G[U]$.

By applying Lemma \ref{le2-1} to $G$ and $U$ we find a subset $U_0\subseteq U$ of size $|U_0|\ge |U|-\delta n\ge (1-127\delta)n>n/2$ such that every subset $X\subset U_0$ of at most $\delta n$ vertices satisfies: $|N(X,U_0)|\ge \frac{1-6\delta}{4\delta}|X|>5|X|$. In addition, every subset $X\subseteq U_0$ of at least $\delta n$ vertices sees at least $|U_0|-|X|-\delta n$ vertices outside of it in $U_0$ by Lemma \ref{le1}. Hence by applying Theorem \ref{th-BCPS} to $G[U_0]$ for $H$ and with $T=T_{\ell_2,t}$, $d=3$, $m=\delta n$ and $M=|V(T_{\ell_2,t})|\le n/2+4\delta n$, we find a copy $T_2$ of $T_{\ell_2,t}$ inside $U_0$.

\medskip

\noindent{\bf Stage 5: double brooms merge into a crown's ring.}\\
Notice that a double broom $T_{\ell,t}$ has the following nice feature: it has two subsets of leaves $L_1$ and $L_2$ (the leaves of the left brush and of the right brush, respectively) so that the tree contains a path of the same length $\ell+2t$ between every vertex in $L_1$ and every vertex in $L_2$. We will use this property to create a cycle $C$ of desired length from double brooms $T_1$ and $T_2$.

The trees $T_1$ and $T_2$ have two naturally defined disjoint sets of leaves each, $L_{11}$ and $L_{12}$ for $T_1$ and $L_{21}$ and $L_{22}$ for $T_2$, all these four sets are of cardinalities at least $\delta n$. Then by Lemma \ref{le1}, the graph $G$ has an edge $f_1$ between $L_{11}$ and $L_{21}$, and another edge $f_2$ between $L_{12}$ and $L_{2}$. These edges connect the endpoints of two disjoint paths in $G[V_1]$, one of length $\ell_1+2t$ in $T_1$ and another of length $\ell_2+2t$ in $T_2$. Hence they close a cycle $C$ of length $\ell_1+\ell_2+4t+2=\lceil n/2\rceil$ in $G[V_1]$. Since $C$ contains $P_1$, we have $C$ including the second matchmaker $S_2$ in full, as desired.

\medskip

\noindent{\bf Stage 6: spikes are attached to the ring --- and we have a crown!}\\
At the final stage of the proof we have a cycle $C$ of length exactly $\lceil n/2\rceil$ such that the first matchmaker set $S_1$ is completely outside $C$, while the second matchmaker set $S_2$ is completely immersed in $C$. Recall that every vertex of $G$ has at least $\delta_1 d/4$ neighbors in $S_1$ and in $S_2$. We need to find a matching of size $\lfloor n/2\rfloor$ between $C$ and $V\setminus V(C)$ in $G$.

As usually, it is enough to verify a Hall-type condition for this bipartite graph. Specifically, we can settle for checking that every set $X$ of size up to $\lceil n/4\rceil$ in one of the sides $V(C),V\setminus V(C)$ is connected to at least $|X|+1$ vertices on the other side. Applying Lemma \ref{le2} with $d_1=\delta_1d/4$ and $S=S_1$ and also $S=S_2$, we conclude that the latter condition holds true for sets $X$ of sizes $|X|\le \delta n$. (The sets $S_1,S_2$ finally perform their function of matchmakers here.) As for sets $X$ with $\delta n<|X|\le \lceil n/4\rceil$, such sets see at least $\lfloor n/2\rfloor-\delta n>|X|+1$ vertices on the other side of the bipartite graph due to Lemma \ref{le1}.

If follows by Hall's Theorem that $G$ contains a matching $M$ of size $|M|=\lfloor n/2\rfloor$ between $V(C)$ and $V\setminus V(C)$. Adjoining this matching to cycle $C$ produces a required crown. The proof is complete.\hfill$\Box$

\section{Concluding remarks}\label{s-concl}
In this paper, we made progress towards the well known conjecture of the author and Sudakov about Hamiltonicity of pseudo-random graphs and proved that an $(n,d,\lambda)$-graph $G$ with $d/\lambda\ge 1000$ contains a Hamilton cycle in its square $G^2$. This was achieved through embedding of a crown on $n$ vertices with $\lfloor n/2\rfloor$ spikes in $G$.

Essentially the same proof, mutatis mutandis, gives also that for any $\epsilon>0$ there exists $C>0$ such that an $(n,d,\lambda)$-graph $G$ with $d/\lambda\ge C$ contains a crown on $n$ vertices with $\lfloor\epsilon n\rfloor$ spikes.

In the spirit of embedding in the square of a pseudo-random graph, like in our main result here, the following challenge seems quite attractive: given a tree $T$ on $n$ vertices of constant maximum degree $\Delta=O(1)$, prove that if $d/\lambda>C$ for some large enough $C=C(\Delta)$, then the square $G^2$ of an $(n,d,\lambda)$-graph $G$ contains a copy of $T$. The elaborate methods developed by Montgomery in his proof of the embedding result for bounded degree spanning trees in sparse random graphs \cite{Mon19} might be of direct relevance here.

\bigskip

\noindent{\bf Acknowledgement.} The author is grateful to Wojciech Samotij for his remarks.

\end{document}